\newcommand{\op}{\ensuremath{^{\mathrm{op}}}}
\newcommand{\Z}{\mathbb{Z}}
\newcommand{\cA}{{\mathcal A}}
\newcommand{\cI}{{\mathcal I}}
\newcommand{\cM}{{\mathcal M}}
\newcommand{\cP}{{\mathcal P}}
\newcommand{\md}{\operatorname{mod}}
\newcommand{\Md}{\operatorname{Mod}}
\newcommand{\Ext}{\operatorname{Ext}}
\newcommand{\Coker}{\operatorname{Cok}}
\title{Projectively generated $d$-abelian categories are $d$-cluster tilting}
\date{\today}
\keywords{Abelian category; homological algebra; cluster-tilting}
\author{Sondre Kvamme}
\address{Mathematisches Institut, Universit\"at Bonn, Germany}
\email{sondre@math.uni-bonn.de}
\begin{document}

\newtheorem{Thm}[equation]{Theorem}
\newtheorem{Lemma}[equation]{Lemma}
\newtheorem{Cor}[equation]{Corollary}
\newtheorem{Prop}[equation]{Proposition}

\theoremstyle{definition}
\newtheorem{Def}[equation]{Definition}
\newtheorem{Ex}[equation]{Example}
\newtheorem{Rem}[equation]{Remark}
\newtheorem{Setting}[equation]{Setting}

\thanks{The author thanks Jan Schr\"oer and Gustavo Jasso for comments on a previous version of this paper. The work was made possible by funding provided by the \emph{Bonn International Graduate School in Mathematics}.}

\subjclass[2010]{18E99}

\begin{abstract}
Building on work of Jasso, we prove that any projectively generated $d$-abelian category is equivalent to a $d$-cluster tilting subcategory of an abelian category with enough projectives. This supports the claim that $d$-abelian categories are good axiomatizations of $d$-cluster tilting subcategories.
\end{abstract}

\maketitle

\setcounter{tocdepth}{2}
\numberwithin{equation}{section}

\tableofcontents

\section{Introduction}
The concept of $d$-cluster tilting subcategories was introduced by Iyama in \cite{I1}, and further developed in \cite{I2}, \cite{I3}. It is the natural framework for doing higher Auslander-Reiten theory. A $d$-cluster tilting subcategory $\cM$ is a contravariantly finite, covariantly finite, and generating-cogenerating subcategory of an abelian category $\cA$ satisfying 
\begin{align}\label{eq:1}
\cM= & \{X\in \cA \text{ } |\text{ } \forall i\in \{1,2,\cdots,d-1\} \text{ } \Ext^i_{\cA}(X,\cM)=0\} \\
& \{X\in \cA \text{ } |\text{ } \forall i\in \{1,2,\cdots,d-1\} \text{ } \Ext^i_{\cA}(\cM,X)=0\}
\end{align}
Examples of such categories are given in \cite{HI1}, \cite{HI2}, \cite{IO1}. A problem with this definition is that it is not clear which properties of $\cM$ are independent of the embedding into $\cA$. To fix this, Jasso introduced in \cite{J1} the concept of a $d$-abelian category (see Definition \ref{def:3}), which is an axiomatization of $d$-cluster tilting subcategories. He shows that any $d$-cluster tilting subcategory is $d$-abelian. Furthermore, he also shows \cite[{Theorem 3.20}]{J1} that if $\cM$ is a small projectively generated $d$-abelian category with category of projective objects denoted by $\cP$, such that there exists an exact duality $D\colon \md \cP\to \md \cP\op$, then the image of the fully faithful functor
\[
F\colon \cM\to \md \cP \quad \quad F(X)= \cM(-,X)|_{\cP}
\]
is $d$-cluster tilting in $\md \cP$. Here $\md \cP$ is the category of finitely presented contravariant functors from $\cP$ to $\Md \Z$. In this note we show that the second assumption is unnecessary.

\begin{Thm}\label{thm:1}
Let $\cM$ be a small projectively generated $d$-abelian category, let $\cP$ be the set of projective objects of $\cM$, and let $F\colon \cM\to \md \cP$ be the functor defined by $F(X):= \cM(-,X)|_{\cP}$. Then the essential image
\[
F\cM:= \{M\in \md \cP\text{ }|\text{ }\exists X\in \cM \text{ such that } F(X)\cong M\}
\]
is $d$-cluster tilting in $\md \cP$.
\end{Thm}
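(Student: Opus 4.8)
The plan is to check directly the defining conditions of a $d$-cluster tilting subcategory for $F\cM\subseteq\md\cP$, following the scheme of \cite[Theorem 3.20]{J1} but eliminating every use of the duality $D$. Throughout I use the standard facts that $\md\cP$ is abelian with enough projectives, that its projective objects are exactly the representable functors $\cP(-,P)$, that $F$ is fully faithful and restricts to an equivalence $\cP\xrightarrow{\sim}\proj(\md\cP)$, and that $F$ sends $d$-exact sequences in $\cM$ to exact sequences in $\md\cP$. Since every representable lies in $F\cM$, the subcategory $F\cM$ contains $\proj(\md\cP)$ and is therefore generating. It then remains to verify that $F\cM$ is functorially finite, that $\Ext^i_{\md\cP}(F\cM,F\cM)=0$ for $1\le i\le d-1$, and the two orthogonality relations
\[
F\cM=\{M\mid \Ext^i_{\md\cP}(M,F\cM)=0,\ 1\le i\le d-1\}=\{M\mid \Ext^i_{\md\cP}(F\cM,M)=0,\ 1\le i\le d-1\}.
\]

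For functorial finiteness I would build approximations from the ($d$-)kernel and ($d$-)cokernel structure of $\cM$, which requires no duality. Given $M\in\md\cP$, choose a projective presentation $\cP(-,P_1)\xrightarrow{F(g)}\cP(-,P_0)\to M\to 0$ with $g\colon P_1\to P_0$ in $\cM$, and let $P_0\xrightarrow{c}C^1\to\cdots\to C^d$ be a $d$-cokernel of $g$. Because $c\circ g=0$, the morphism $F(c)$ kills the image of $F(g)$ and hence factors as $\bar h\colon M\to F(C^1)$ through the epimorphism $\cP(-,P_0)\twoheadrightarrow M$. Given any $\varphi\colon M\to F(Y)$ with $Y\in\cM$, precomposition with this epimorphism yields $F(a)$ for a unique $a\colon P_0\to Y$ with $a\circ g=0$; the universal property of the $d$-cokernel produces $b\colon C^1\to Y$ with $a=b\circ c$, and since $\cP(-,P_0)\twoheadrightarrow M$ is epic one gets $\varphi=F(b)\circ\bar h$. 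Thus $\bar h$ is a left $F\cM$-approximation, so $F\cM$ is covariantly finite; the construction dual to this one, using $d$-kernels, gives right approximations, and $F\cM$ is functorially finite.

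The rigidity $\Ext^i_{\md\cP}(F(X),F(Y))=0$ for $1\le i\le d-1$ I expect to obtain by transporting resolutions: projective generation of $\cM$ yields a projective resolution of $F(X)$ in $\md\cP$ whose initial segment is the image under $F$ of a $d$-exact sequence of projectives of $\cM$ augmented over $X$, and applying $\Hom_{\md\cP}(-,F(Y))$ together with full faithfulness of $F$ identifies the cohomology in degrees $1,\dots,d-1$ with a complex that is exact by the $d$-cokernel axioms in $\cM$. For the first orthogonality relation the inclusion $\subseteq$ is exactly this rigidity. For $\supseteq$, given $M$ with $\Ext^i_{\md\cP}(M,F\cM)=0$ for $1\le i\le d-1$, I would take a projective resolution of $M$, reinterpret its first $d$ syzygies through $F$ as a diagram in $\cM$, and use the $d$-pullback axiom together with the $\Ext$-vanishing to assemble these into an object $X\in\cM$ with $M\cong F(X)$; concretely the vanishing should force the right approximation of $M$ to be an admissible epimorphism whose iterated kernels stabilise inside $F\cM$.

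The second orthogonality relation is the main obstacle, and is precisely where \cite[Theorem 3.20]{J1} uses the exact duality $D\colon\md\cP\to\md\cP\op$: the duality endows $\md\cP$ with enough injectives and converts the condition $\Ext^i_{\md\cP}(F\cM,M)=0$ into the already-handled condition for the opposite category. Without the duality $\md\cP$ need not have enough injectives, so one cannot compute $\Ext^i_{\md\cP}(F\cM,M)$ from an injective resolution of $M$, and the symmetry between the two orthogonality relations breaks. The plan is to recover it intrinsically: compute $\Ext^i_{\md\cP}(F(X),M)$ from the projective resolution of $F(X)$ coming from a $d$-exact sequence in $\cM$, and combine the hypothesised vanishing with the left $F\cM$-approximation $\bar h\colon M\to F(C^1)$ from the finiteness step. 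The $\Ext$-vanishing should force $\bar h$ to be a monomorphism whose cokernel again lies in $F\cM$ and whose higher obstructions vanish, so that iterating and invoking the $d$-pushout axiom in $\cM$ exhibits $M$ as $F(X)$ for some $X\in\cM$. Verifying that left $\Ext$-vanishing is faithfully detected by transported $d$-exact sequences of $\cM$, with no recourse to injectives in $\md\cP$, is the technical heart of the argument.
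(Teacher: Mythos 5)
Your proposal misplaces both the role of the duality in \cite[Theorem 3.20]{J1} and, as a result, where the actual work lies. Jasso's Lemma 3.22 (quoted above as Lemma \ref{lemma:1}) already establishes, for a small projectively generated $d$-abelian category and \emph{without any duality hypothesis}, that $\md\cP$ is abelian, that $F$ is fully faithful, that $\Ext^i_{\md\cP}(F\cM,F\cM)=0$ for $1\le i\le d-1$, that \emph{both} orthogonality relations hold, and that $F\cM$ is covariantly finite. The duality enters Jasso's Theorem 3.20 only to transfer ``generating'' to ``cogenerating'' and ``covariantly finite'' to ``contravariantly finite''. Consequently the step you single out as ``the technical heart'' and explicitly leave unverified --- recovering $F\cM=\{M\mid\Ext^i_{\md\cP}(F\cM,M)=0,\ 1\le i\le d-1\}$ without injectives --- is already available, while the two conditions that genuinely require a new argument, namely that $F\cM$ is cogenerating and contravariantly finite, are the ones you treat most lightly: cogenerating is absent from your list of conditions to verify, and contravariant finiteness is dispatched as ``the construction dual to this one''. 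That dualization is not formal: your left-approximation construction starts from a projective presentation of $M$ in $\md\cP$, and $\md\cP$ need not have enough injectives, so there is no injective copresentation to dualize from. As written, the proposal therefore has a genuine gap at contravariant finiteness and at the (admittedly unproved) second orthogonality relation.

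The repair is short and is essentially latent in your second paragraph. The exactness of $F(P_1)\to F(P_0)\to F(C^1)\to\cdots\to F(C^d)\to 0$ coming from Theorem \ref{thm:2} shows that your left approximation $\bar h\colon M\to F(C^1)$ is in fact a \emph{monomorphism}; recording this proves that $F\cM$ is cogenerating (this is Lemma \ref{lemma:3}). With that in hand one gets, for any $M$, an exact sequence $0\to M\xrightarrow{i}F(X^d)\xrightarrow{F(f^d)}F(X^{d+1})$ with $f^d$ a morphism of $\cM$; taking a $d$-kernel $(f^0,\dots,f^{d-1})$ of $f^d$ \emph{in $\cM$} (axiom (A1), no injectives in $\md\cP$ needed), the induced map $p\colon F(X^{d-1})\to M$ with $i\circ p=F(f^{d-1})$ is a right $F\cM$-approximation, checked exactly as you checked $\bar h$, using fullness of $F$ and the exactness of $\cM(X,X^{d-1})\to\cM(X,X^d)\to\cM(X,X^{d+1})$ (this is Lemma \ref{lemma:4}). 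Everything else in your outline should simply be replaced by a citation of Lemma \ref{lemma:1}, and your final paragraph can be deleted.
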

We emphasize that almost all of the work towards proving this theorem has been done in \cite{J1}. In fact, by Lemma \ref{lemma:1} the only thing which remains is to show that $F\cM$ is cogenerating and contravariantly finite, and the proof of these properties are straightforward.

\section{Preliminaries} 

We recall the definition of $d$-exact sequences and $d$-abelian categories.

\begin{Def}[{\cite[Definition 2.2]{J1}}]\label{def:1}
Let $\cM$ be an additive category and $f^0\colon X^0\to X^1$ a morphism in $\cM$. A $d$-\emph{cokernel} of $f^0$ is a sequence of maps
\[
(f^1,\cdots f^d): X^1\xrightarrow{f^1} X^2\xrightarrow{f^2}\cdots  \xrightarrow{f^{d-1}}X^{d} \xrightarrow{f^{d}}X^{d+1}
\] 
such that the sequence
\begin{multline*}
0\to \cM(X^{d+1},Z)\xrightarrow{-\circ f^{d}} \cM(X^{d},Z)\xrightarrow{-\circ f^{d-1}} \cdots \\
\cdots \xrightarrow{-\circ f^1}\cM(X^1,Z) \xrightarrow{-\circ f^0}\cM(X^0,Z)
\end{multline*}
is exact for all $Z\in \cM$. Dually, a $d$-\emph{kernel} of a morphism $g^d\colon Y^{d}\to Y^{d+1}$ is a sequence of maps
\[
(g^0,\cdots g^{d-1}): Y^0\xrightarrow{g^0} Y^1\xrightarrow{g^1}\cdots  \xrightarrow{g^{d-2}}Y^{d-1} \xrightarrow{g^{d-1}}Y^d
\] 
such that the sequence
\begin{multline*}
0\to \cM(Z,Y^0)\xrightarrow{g^{0}\circ -} \cM(Z,Y^1)\xrightarrow{g^{1}\circ -} \cdots \\
\cdots \xrightarrow{g^{d-1}\circ -}\cM(Z,Y^d) \xrightarrow{g^d\circ -}\cM(Z,Y^{d+1})
\end{multline*}
is exact for all $Z\in \cM$.
\end{Def}

\begin{Def}[{\cite[Definition 2.4]{J1}}]\label{def:2}
Let $\cM$ be an additive category. A $d$-\emph{exact sequence} is a sequence of maps
\[
X^0\xrightarrow{f^0} X^1\xrightarrow{f^1} \cdots \xrightarrow{f^d} X^{d+1}
\]
such that $(f^0,\cdots ,f^{d-1})$ is a $d$-kernel of $f^d$, and $(f^1,\cdots,f^d)$ is a $d$-cokernel of $f^0$.
\end{Def}  

Recall that $\cM$ is \emph{idempotent complete} if for any idempotent $e\colon X\to X$ in $\cM$ there exists morphisms $\pi\colon X\to Y$ and $i\colon Y\to X$ such that $i\circ \pi = e$ and $\pi\circ i = 1_Y$.

\begin{Def}[{\cite[Definition 3.1]{J1}}]\label{def:3}
A $d$-\emph{abelian category} is an additive category $\cM$ satisfying the following axioms:
\begin{itemize}
	\item[(A0)] $\cM$ is idempotent complete.
	\item[(A1)] Every morphism in $\cM$ has a $d$-kernel and a $d$-cokernel
	\item[(A2)] Let $f^0\colon X^0\to X^1$ be a monomorphism and $(f^1,\cdots f^d)$ a $d$-cokernel of $f^0$. Then the sequence 
	\[
	X^0\xrightarrow{f^0} X^1\xrightarrow{f^1} X^2 \xrightarrow{f^2} \cdots \xrightarrow{f^{d-1}} X^{d} \xrightarrow{f^d} X^{d+1}
	\]
	is $d$-exact.
	\item[(A2\op)] Let $g^d\colon Y^d\to Y^{d+1}$ be an epimorphism and $(g^0,\cdots g^{d-1})$ a $d$-kernel of $g^d$. Then the sequence 
	\[
	Y^0\xrightarrow{g^0} Y^1\xrightarrow{g^1}\cdots  \xrightarrow{g^{d-2}}Y^{d-1} \xrightarrow{g^{d-1}}Y^d\xrightarrow{g^d}Y^{d+1}
	\]
	is $d$-exact.
\end{itemize}
\end{Def}

Recall that $P\in \cM$ is projective if for every epimorphism $f\colon X\to Y$ in $\cM$ the sequence $\cM(P,X)\xrightarrow{f\circ -} \cM(P,Y)\to 0$ is exact. The following results holds for projective objects in $d$-abelian categories.

\begin{Thm}[{\cite[Theorem 3.12]{J1}}]\label{thm:2}
Let $\cM$ be a $d$-abelian category, let $P$ be a projective object in $\cM$, let $f^0\colon X^0\to X^1$ be a morphism in $\cM$, and let $(f^1,\cdots f^d)$ be a $d$-cokernel of $f^0$. Then the sequence
\begin{multline*}
\cM(P,X^0)\xrightarrow{f^0\circ -}\cM(P,X^1)\xrightarrow{f^1\circ -}\cM(P,X^2)\xrightarrow{f^2\circ -} \cdots \\
\cdots \xrightarrow{f^d\circ -}\cM(P,X^{d+1})\to 0
\end{multline*}
is exact.
\end{Thm}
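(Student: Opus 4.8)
The plan is to verify exactness of the sequence spot by spot, isolating the one genuinely higher-dimensional difficulty. Two spots are immediate. At the right end, the first arrow $0\to \cM(X^{d+1},Z)\xrightarrow{-\circ f^d}\cM(X^d,Z)$ of the defining $d$-cokernel sequence is injective for every $Z$, which says precisely that $f^d$ is an epimorphism; since $P$ is projective, $\cM(P,X^d)\xrightarrow{f^d\circ -}\cM(P,X^{d+1})$ is therefore surjective, giving exactness at $\cM(P,X^{d+1})$. At each interior spot the inclusion $\im(f^{i-1}\circ-)\subseteq \Ker(f^i\circ -)$ is clear once we note $f^if^{i-1}=0$, which follows by evaluating the complex $\cM(X^\bullet,Z)$ at $Z=X^{i+1}$ on the identity. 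So the whole content is the reverse inclusion $\Ker(f^i\circ-)\subseteq \im(f^{i-1}\circ -)$ at the spots $1\le i\le d$.

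The key observation reducing the work is that the statement is essentially trivial when the sequence is $d$-exact. Indeed, if $X^0\xrightarrow{f^0}\cdots\xrightarrow{f^d}X^{d+1}$ is $d$-exact then $(f^0,\dots,f^{d-1})$ is a $d$-kernel of $f^d$, and the defining exactness of a $d$-kernel (Definition \ref{def:1}) is already phrased in terms of the \emph{covariant} functor $\cM(Z,-)$. Taking $Z=P$ gives exactness of $0\to\cM(P,X^0)\to\cdots\to\cM(P,X^d)\xrightarrow{f^d\circ-}\cM(P,X^{d+1})$ at every interior spot at once, with projectivity only needed for surjectivity at the last spot as above. Thus the theorem for $d$-exact sequences needs no real work; the content is that an arbitrary $f^0$ can be handled, even though $X^0\xrightarrow{f^0}\cdots\to X^{d+1}$ need not be $d$-exact (this happens exactly when $f^0$ fails to be a monomorphism, by (A2)).

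My strategy is therefore to compare the given $d$-cokernel with a genuine $d$-exact sequence built from the epimorphism $f^d$. Since $f^d$ is epi, (A2\op) applied to a $d$-kernel $(h^0,\dots,h^{d-1})$ of $f^d$ produces a $d$-exact sequence $E\colon H^0\to\cdots\to H^{d-1}\xrightarrow{h^{d-1}}X^d\xrightarrow{f^d}X^{d+1}$, for which $\cM(P,E)$ is exact by the previous paragraph. Using the complex identities $f^{j+1}f^{j}=0$ one lifts the $f^j$ successively through the $d$-kernel to obtain a comparison morphism of sequences $c\colon X^\bullet\to E$ that is the identity on $X^d$ and $X^{d+1}$ and satisfies $c^{j}f^{j-1}=h^{j-1}c^{j-1}$; applying $\cM(P,-)$ gives a morphism of complexes $\cM(P,X^\bullet)\to \cM(P,E)$ into an exact complex that is an isomorphism in the top two degrees. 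The remaining task is to transfer exactness across $c$: lift a cycle $h\colon P\to X^i$ (with $f^ih=0$) first into $H^{i-1}$ via the $d$-kernel property of $E$, and then back down along $c^{i-1}\colon X^{i-1}\to H^{i-1}$ using projectivity of $P$.

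This last step is where I expect the main obstacle. Transferring the cycle requires that the comparison map be epimorphic in the relevant degree — the higher analogue of the elementary fact (valid for $d=1$, where $\cM$ is abelian) that $X^0$ surjects onto $\im(f^0)=\Ker(f^1)$. In the $d$-abelian setting there is no image object to invoke, and one cannot in general factor $f^0$ as a monomorphism after an epimorphism, so this surjectivity must be extracted directly. The natural way to do so is a descending diagram chase that combines the \emph{contravariant} defining exactness of the $d$-cokernel of $f^0$ with the $d$-exactness of $E$: a map $g$ annihilated by the comparison, together with the identity $f^dh^{d-1}=0$ and the relations $c^{j}f^{j-1}=h^{j-1}c^{j-1}$, is fed through both exact sequences to force $g=0$ one degree at a time. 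Making this chase bottom out correctly — equivalently, proving the higher image-surjectivity — is the crux, and is exactly the point at which projectivity of $P$ and both exactness conditions must be used together.
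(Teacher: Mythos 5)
Your preparatory observations are all correct: surjectivity at $\cM(P,X^{d+1})$ follows from $f^d$ being an epimorphism together with projectivity of $P$; the relations $f^if^{i-1}=0$ follow from the defining complex; and for a $d$-exact sequence the interior exactness is immediate from the $d$-kernel property applied with $Z=P$. Note, however, that the paper does not prove this theorem at all --- it is quoted verbatim from \cite[Theorem 3.12]{J1} --- so the only question is whether your argument stands on its own, and it does not. The entire content of the statement is the inclusion $\Ker(f^i\circ -)\subseteq\im(f^{i-1}\circ -)$ at the interior spots when $X^0\to\cdots\to X^{d+1}$ is \emph{not} $d$-exact, and this is exactly the step you defer: you name it ``the crux'' and ``the main obstacle'' and describe only in outline (``a descending diagram chase \ldots\ to force $g=0$ one degree at a time'') how it might be handled. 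A proof whose single nontrivial step is acknowledged but not carried out is a plan, not a proof.

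Moreover, the route you sketch does not obviously close. At the spot $i=d$, after lifting a cycle $h\colon P\to X^d$ with $f^dh=0$ to some $k\colon P\to H^{d-1}$ via the $d$-kernel of $f^d$, producing $g\colon P\to X^{d-1}$ with $f^{d-1}g=h^{d-1}c^{d-1}g=h^{d-1}k=h$ requires $\cM(P,c^{d-1})$ to be surjective modulo $\im \cM(P,h^{d-2})$; this is not a consequence of the mere existence of the comparison morphism, and it is essentially a restatement of the exactness you are trying to prove (for $d=1$ it is literally the assertion that $X^0$ maps onto $\im f^0=\Ker f^1$, which in an abelian category is supplied by the image factorization that is unavailable here). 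At the spots $i<d$ the situation is worse: the cycle $h\colon P\to X^i$ does not live in a term of $E$, so to invoke the $d$-kernel property of $E$ you must first push $h$ forward along $c^i$, and since $c^i$ need not be a monomorphism a factorization of $c^ih$ gives you no factorization of $h$ itself. The missing input is the comparison and homotopy theory of $d$-cokernels together with the $d$-pushout construction developed in \cite{J1} (Section 3), which is the machinery Jasso's proof of this theorem rests on; a formal chase between the two Hom-exact sequences you have written down does not suffice.
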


\begin{Def}[{\cite[Definition 3.19]{J1}}]\label{def:4} 
Let $\cM$ be a $d$-abelian category. We say that $\cM$ is \emph{projectively generated} if for every objects $X\in \cM$ there exists a projective object $P\in \cM$ and an epimorphism $f\colon P\to X$.
\end{Def}

Let $\cM$ be a projectively generated $d$-abelian category, let $\cP$ be the category of projective objects of $\cM$, and let $F\colon \cM\to \md \cP$ be the functor $F(X)=\cM(-,X)|_{\cP}$. Theorem \ref{thm:2} tells us that if $(f^1,\cdots f^d)$ is a $d$-cokernel of $f^0$, then the sequence
\[
F(X^0)\xrightarrow{F(f^0)}F(X^1)\xrightarrow{F(f^1)}F(X^2)\xrightarrow{F(f^2)}\cdots \xrightarrow{F(f^{d})}F(X^{d+1})\to 0
\]
is exact in $\md \cP$.

 Parts of the proof that a projectively generated $d$-abelian category is $d$-cluster tilting in $\md \cP$ follows from the following lemma. Note that there is a typo in \cite{J1}; in the lemma they write that $F\cM$ is contravariantly finite, but in the proof they show that it is covariantly finite.   

\begin{Lemma}[{\cite[Lemma 3.22]{J1}}]\label{lemma:1}
Let $\cM$ be a small projectively generated $d$-abelian category, let $\cP$ the category of projective objects of $\cM$, and let $F\colon \cM\to \md \cP$ be the functor defined by $F(X)=\cM(-,X)|_{\cP}$. Also, let
\[
F\cM:=\{M\in \md \cP\text{ }|\text{ }\exists X\in \cM \text{ such that } F(X)\cong M\}
\]
be the essential image of $F$. Then the following holds:
\begin{itemize}
	\item[(i)]$\md \cP$ is abelian;
	\item[(ii)]$F$ is fully faithful;
	\item[(iii)]For all $k\in \{1,\cdots ,d-1\}$ we have
	\[
	\Ext^k_{\md \cP}(F\cM,F\cM)=0;
	\]
	\item[(iv)] We have
	\[
	F\cM= \{M\in \md \cP \text{ } |\text{ } \forall i\in \{1,2,\cdots,d-1\} \text{ } \Ext^i_{\md \cP}(M,F\cM)=0\};
	\]
	\item[(v)] We have
	\[
	F\cM= \{M\in \md \cP \text{ } |\text{ } \forall i\in \{1,2,\cdots,d-1\} \text{ } \Ext^i_{\md \cP}(F\cM,M)=0\};
	\]
	\item[(vi)] $F\cM$ is covariantly finite in $\md \cP$.
\end{itemize}
\end{Lemma}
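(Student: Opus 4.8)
The plan is to establish the six items in order, treating (i) and (ii) as essentially formal consequences of the theory of finitely presented functors together with Theorem \ref{thm:2}, and then to extract the rigidity, orthogonality, and finiteness statements (iii)--(vi) from the interplay between $d$-exact sequences in $\cM$ and their images under $F$ in $\md\cP$. Throughout I would use two running facts: that $\cP$ is idempotent complete (a summand of a projective is projective, and the splitting exists by (A0)), so that the finitely generated projectives of $\md\cP$ are exactly the representables $\cP(-,P)=FP$ with $P\in\cP$; and that $F$ identifies $\Hom_{\md\cP}(FP,FY)$ with $\cM(P,Y)$ by Yoneda.

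For (i) I would reduce to the standard fact that $\md\cP$ is abelian as soon as $\cP$ has weak kernels. To build a weak kernel of $g\colon P\to Q$ in $\cP$, take a $d$-kernel $(g^0,\dots,g^{d-1})$ of $g$ in $\cM$, which exists by (A1), and compose its last term $g^{d-1}\colon Y^{d-1}\to P$ with an epimorphism $P'\to Y^{d-1}$ from a projective (using that $\cM$ is projectively generated); the defining exactness of the $d$-kernel, evaluated on projectives, shows the resulting map $P'\to P$ is a weak kernel of $g$. For (ii) the key preliminary is that every $X\in\cM$ admits a two-term projective presentation $FP^1\to FP^0\to FX\to 0$ in $\md\cP$: choose an epimorphism $P^0\to X$, take a $d$-kernel of it (so by (A2$\op$) we obtain a $d$-exact sequence $Y^0\to\cdots\to Y^{d-1}\to P^0\to X$), and cover $Y^{d-1}$ by a projective $P^1$; Theorem \ref{thm:2} supplies the right-exactness. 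Full faithfulness then follows by the usual restricted-Yoneda argument: faithfulness because $P^0\to X$ is an epimorphism, and fullness by identifying $\cM(X,Y)$ with $\Ker(\cM(P^0,Y)\to\cM(P^1,Y))$, which matches $\md\cP(FX,FY)$ once one uses that $(g^1,\dots,g^{d-1},\,P^0\to X)$ is a $d$-cokernel of $g^0$.

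The heart of the lemma, and the main obstacle, is the rigidity statement (iii). The mechanism I would use is: first observe that $F$ sends every $d$-exact sequence to a short exact sequence $0\to FX^0\to\cdots\to FX^{d+1}\to 0$ in $\md\cP$, the right-hand exactness being Theorem \ref{thm:2} and the left-hand injectivity coming from the fact that $X^0\to X^1$ is a monomorphism (a general property of $d$-exact sequences). Applying this to the sequence $Y^0\to\cdots\to Y^{d-1}\to P^0\to X$ produced above yields an exact sequence with a single projective term $FP^0$; splicing this with the two-term presentations of the $FY^i$ produces a projective resolution of $FX$ whose differentials, after applying $\Hom_{\md\cP}(-,FY)$ and rewriting $\Hom_{\md\cP}(FP^i,FY)=\cM(P^i,Y)$ via full faithfulness, are precisely the maps occurring in $d$-cokernel sequences in $\cM$. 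The vanishing of $\Ext^k_{\md\cP}(FX,FY)$ for $1\le k\le d-1$ then drops out of the defining exactness of those $d$-cokernels; already for $d=2$ one checks directly that $\Ext^1_{\md\cP}(FX,FY)$ is the cokernel of the map $\cM(P^0,Y)\to\{h\colon Y^1\to Y\mid hg^0=0\}$, $u\mapsto ug^1$, which vanishes exactly because $Y^1\to P^0$ together with $P^0\to X$ is a $2$-cokernel of $g^0$. Carrying out the bookkeeping of this resolution uniformly in $d$ is the delicate point.

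Finally, (vi) and the orthogonality statements (iv), (v) I would deduce from the same $d$-cokernel coresolutions. For (vi), given $M$ with presentation $FP^1\xrightarrow{F\delta}FP^0\to M\to 0$, take a $d$-cokernel $(c^1,\dots,c^d)$ of $\delta$ in $\cM$; since $c^1\delta=0$ the map $Fc^1$ factors as $\phi\colon M\to FC^1$, and the $d$-cokernel exactness shows $\phi$ is a left $F\cM$-approximation, so $F\cM$ is covariantly finite. For (iv) and (v) the inclusion $\subseteq$ is exactly (iii); for $\supseteq$ I would start from the projective presentation of $M$ (with terms in $F\cP\subseteq F\cM$) and use Theorem \ref{thm:2} to exhibit $M$ inside a length-$d$ coresolution $0\to M\to FC^1\to\cdots\to FC^d\to 0$ with all $FC^i\in F\cM$. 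The hypothesis that $M$ is $\Ext$-orthogonal to $F\cM$ in degrees $1,\dots,d-1$, combined with the rigidity (iii) and the approximation of (vi), then forces $M$ into $F\cM$ by the standard recognition argument for $d$-cluster tilting subcategories. As with (iii), the genuine content lies in coupling the $\Ext$-vanishing with the $d$-exact structure; the surrounding verifications are formal.
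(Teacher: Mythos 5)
First, a point of comparison that you could not have known: the paper does not prove this statement at all. Lemma \ref{lemma:1} is imported verbatim from \cite[Lemma 3.22]{J1}, and the paper's own contribution consists only of the two properties \emph{not} covered by it, namely that $F\cM$ is cogenerating (Lemma \ref{lemma:3}) and contravariantly finite (Lemma \ref{lemma:4}). So there is no in-paper proof to measure your attempt against; what you have written is a reconstruction of Jasso's argument. With that caveat: your treatment of (i), (ii) and (vi) is correct and essentially forced. In particular, $\cP$ is idempotent complete by (A0), weak kernels in $\cP$ are obtained from $d$-kernels by composing with an epimorphism from a projective, the two-term presentation $FP^1\to FP^0\to FX\to 0$ comes from Theorem \ref{thm:2} applied to the $d$-exact sequence ending in an epimorphism $P^0\to X$, and the left $F\cM$-approximation $M\to FC^1$ built from a $d$-cokernel of the presenting morphism is exactly the right construction.

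The weak point is (iii), and it is more than bookkeeping. Splicing the two-term presentations of the $FY^i$ into the exact sequence $0\to FY^0\to\cdots\to FY^{d-1}\to FP^0\to FX\to 0$ does not produce a projective resolution of $FX$: the $FY^i$ are not projective, their presentations have non-projective syzygies, and a genuine resolution of this complex (Cartan--Eilenberg style) has differentials that are not simply the maps of $d$-cokernel sequences. The argument that actually closes is a dimension shift with an induction on the cohomological degree $k$: break the long exact sequence into short exact sequences $0\to C_{j-1}\to FY^{j}\to C_{j}\to 0$ with $C_j=\Ima(FY^{j}\to FY^{j+1})$; the associated long exact sequences exhibit $\Ext^k_{\md\cP}(FX,FZ)$ as a subquotient of $\Coker\bigl(\cM(Y^{j+1},Z)\to \Hom_{\md\cP}(C_j,FZ)\bigr)$ up to error terms of the form $\Ext^{<k}_{\md\cP}(FY^i,FZ)$, which vanish by the inductive hypothesis applied to the arbitrary objects $Y^i\in\cM$; the remaining cokernel vanishes because $\Hom_{\md\cP}(C_j,FZ)$ identifies with $\{h\colon Y^j\to Z\mid hg^{j-1}=0\}$ and the $d$-cokernel condition gives exactness of $\cM(Y^{j+1},Z)\to\cM(Y^{j},Z)\to\cM(Y^{j-1},Z)$. (Your explicit $d=2$ computation is precisely this argument in its simplest case, which is why it looks clean there.) A related asymmetry occurs in (iv) versus (v): the coresolution $0\to M\to FC^1\to\cdots\to FC^d\to 0$ is the correct input for (v), where one splits the monomorphism $M\to FC^1$ by pushing the obstruction class up to $\Ext^{d-1}_{\md\cP}(FC^d,M)=0$; for (iv) one needs the dual construction from a $d$-kernel, splitting an epimorphism onto $M$ instead. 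Your appeal to ``the standard recognition argument'' does cover this, but as written the sketch only sets up the half of the machinery relevant to (v).
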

Since $F\cM$ is obviously generating, it only remains to show that $F\cM$ is cogenerating and contravariantly finite.

\section{Proof of Theorem \ref{thm:1}}
Throughout this section we fix an integer $d\geq 2$, a projectively generated $d$-abelian category $\cM$, and we let $\cP$ denote the category of projective objects in $\cM$. 

\begin{Lemma}\label{lemma:3}
$F\cM$ is cogenerating in $\md \cP$.
\end{Lemma}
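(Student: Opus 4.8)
The plan is to show directly that every object of $\md\cP$ admits a monomorphism into an object of $F\cM$. So fix $M\in \md\cP$. By definition of $\md\cP$ there is a projective presentation $\cP(-,P_1)\xrightarrow{\alpha}\cP(-,P_0)\to M\to 0$ with $P_0,P_1\in\cP$. By the Yoneda lemma $\alpha=\cP(-,g)$ for a unique morphism $g\colon P_1\to P_0$ in $\cP$, and since $P_0,P_1$ are projective we have $\cP(-,P_i)=\cM(-,P_i)|_{\cP}=F(P_i)$ and $\cP(-,g)=F(g)$. Hence $M\cong \Coker(F(g))$.

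Next I would exploit the $d$-abelian structure of $\cM$. Viewing $g$ as a morphism $f^0:=g\colon X^0\to X^1$ with $X^0=P_1$ and $X^1=P_0$, axiom (A1) provides a $d$-cokernel $(f^1,\dots,f^d)$,
\[
X^1\xrightarrow{f^1}X^2\xrightarrow{f^2}\cdots\xrightarrow{f^d}X^{d+1},
\]
in $\cM$. By Theorem \ref{thm:2} (applied through $F$, as recorded after Definition \ref{def:4}) the induced sequence
\[
F(X^0)\xrightarrow{F(f^0)}F(X^1)\xrightarrow{F(f^1)}F(X^2)\xrightarrow{F(f^2)}\cdots\xrightarrow{F(f^d)}F(X^{d+1})\to 0
\]
is exact in $\md\cP$. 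In particular it is exact at $F(X^1)$, so $\Ima(F(f^0))=\Ker(F(f^1))$ and therefore
\[
M\cong\Coker(F(f^0))=F(X^1)/\Ima(F(f^0))=F(X^1)/\Ker(F(f^1))\cong\Ima(F(f^1)).
\]
Since $\Ima(F(f^1))$ is a subobject of $F(X^2)$, the canonical inclusion gives a monomorphism $M\hookrightarrow F(X^2)$; as $X^2\in\cM$ we have $F(X^2)\in F\cM$, which is exactly what cogeneration requires.

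The argument is essentially routine, and the only points that need care are bookkeeping. First, one must know that the projective objects of $\md\cP$ are precisely the representable functors $\cP(-,P)$ with $P\in\cP$ (using that $\cP$ is additive and idempotent complete), so that a genuine projective presentation with $P_0,P_1\in\cP$ exists; this is what lets us realise $M$ as a cokernel of a morphism coming from $\cM$. Second, one should check the identification $F(g)=\cP(-,g)$ at the level of functors, which is immediate from the definition of $F$ together with $\cM(Q,P_i)=\cP(Q,P_i)$ for $Q,P_i\in\cP$. Since $d\geq 2$, the object $X^2$ genuinely occurs in the $d$-cokernel, so there is no degenerate case to worry about. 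I do not expect a serious obstacle here: the substance of the result is carried entirely by Theorem \ref{thm:2}, whose exactness statement makes the cokernel $M$ reappear as an honest subobject of something in $F\cM$.
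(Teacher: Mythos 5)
Your proof is correct and follows essentially the same route as the paper: realise $M$ as the cokernel of $F(f^0)$ for a morphism $f^0$ between projectives of $\cM$, take a $d$-cokernel, and use Theorem \ref{thm:2} to identify $M$ with a subobject of $F(X^2)$. The only cosmetic difference is that you invoke the Yoneda lemma to lift the presentation map, where the paper cites fullness of $F$; these amount to the same thing on $\cP$.
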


\begin{proof}
Let $G\in \md \cP$ be arbitrary. Since $G$ is finitely presented, we can find projective objects $P^0,P^1\in \cM$ and a morphism $\phi\colon F(P^0)\to F(P^1)$ such that $\Coker \phi \cong G$. Since $F$ is full, there exists a morphism $f^0\colon P^0\to P^1$  in $\cM$ such that $F(f^0)=\phi$. Let 
\[
(f^1,\cdots f^d): P^1\xrightarrow{f^1} X^2\xrightarrow{f^2}\cdots  \xrightarrow{f^{d-1}}X^{d} \xrightarrow{f^{d}}X^{d+1}
\]
be a $d$-cokernel of $f^0$. By Theorem \ref{thm:2} we know that the sequence
\[
F(P^0)\xrightarrow{F(f^0)}F(P^1)\xrightarrow{F(f^1)}F(X^2)\xrightarrow{F(f^2)}\cdots \xrightarrow{F(f^{d})}F(X^{d+1})\to 0
\]
is exact. In particular, we have a monomorphism
\[
G\cong \Coker (F(f^0)) \to F(X^2)
\]
This shows that $F\cM$ is cogenerating.
\end{proof}

\begin{Lemma}\label{lemma:4}
$F\cM$ is contravariantly finite in $\md \cP$.
\end{Lemma}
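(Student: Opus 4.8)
The plan is to reuse the construction from the proof of Lemma \ref{lemma:3}, which realizes an arbitrary $G\in\md \cP$ as the kernel of a morphism between objects of $F\cM$ coming from $\cM$, and then to produce an explicit right $F\cM$-approximation of $G$ from a $d$-kernel taken in $\cM$.

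First I would fix $G\in\md \cP$ and run exactly the setup of Lemma \ref{lemma:3}: choose a presentation $F(P^0)\xrightarrow{F(f^0)}F(P^1)\to G\to 0$ with $P^0,P^1$ projective (using that $F$ is full), take a $d$-cokernel $(f^1,\dots,f^d)$ of $f^0$ with $P^1\xrightarrow{f^1}X^2\xrightarrow{f^2}X^3\to\cdots$, and invoke Theorem \ref{thm:2} to obtain the exact sequence $F(P^0)\to F(P^1)\to F(X^2)\xrightarrow{F(f^2)}F(X^3)\to\cdots\to 0$. This identifies $G$ with $\Ker(F(f^2))$ and yields the monomorphism $\iota\colon G\hookrightarrow F(X^2)$ already used in Lemma \ref{lemma:3}. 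The point is that $f^2$ is an honest morphism of $\cM$ between objects of $F\cM$.

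Next I would take a $d$-kernel $(k^0,\dots,k^{d-1})$ of $f^2\colon X^2\to X^3$ in $\cM$ (which exists by axiom (A1) of Definition \ref{def:3}) and write $k:=k^{d-1}\colon K\to X^2$ for its last map. By the very definition of a $d$-kernel (Definition \ref{def:1}) the composite $f^2 k$ vanishes, so $F(f^2)\circ F(k)=0$ and $F(k)\colon F(K)\to F(X^2)$ factors through $\iota$; let $\theta\colon F(K)\to G$ be the induced map. Since $K\in\cM$ we have $F(K)\in F\cM$, and I claim $\theta$ is a right $F\cM$-approximation of $G$. To check this, take any $Y\in\cM$ and any $h\colon F(Y)\to G$; composing with $\iota$ and using that $F$ is full and faithful (Lemma \ref{lemma:1}) gives $\iota h=F(\alpha)$ for a unique $\alpha\colon Y\to X^2$, and $F(f^2\alpha)=0$ forces $f^2\alpha=0$. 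The defining exactness of the $d$-kernel (Definition \ref{def:1}) makes the sequence $\cM(Y,K)\xrightarrow{k\circ-}\cM(Y,X^2)\xrightarrow{f^2\circ-}\cM(Y,X^3)$ exact, so $\alpha=k\beta$ for some $\beta\colon Y\to K$. Then $\iota h=F(k)F(\beta)=\iota\theta F(\beta)$, and cancelling the monomorphism $\iota$ yields $h=\theta\circ F(\beta)$. As every object of $F\cM$ is isomorphic to some $F(Y)$, this shows that $\Hom_{\md \cP}(F\cM,\theta)$ is surjective, i.e. $\theta$ is the desired approximation.

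The main thing to get right is conceptual rather than computational: one should resist trying to lift maps along the epimorphism $F(P^1)\twoheadrightarrow G$, since that would require a covariant Hom-exactness at $X^2$ which can fail (it would need $f^0$ to be a monomorphism, and $d$-abelian categories need not admit epi-mono factorizations). Instead one exploits that $G$ occurs as $\Ker(F(f^2))$, so that the $d$-kernel of $f^2$ supplies the required factorization for free. Once this viewpoint is adopted I expect no serious obstacle, the only point to verify being that the defining exactness of a $d$-kernel is exactly the covariant Hom-exactness needed.
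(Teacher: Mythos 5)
Your proof is correct and follows essentially the same route as the paper's: realize $G$ as the kernel of $F(f)$ for a morphism $f$ of $\cM$ between objects whose images lie in $F\cM$, take a $d$-kernel of $f$ in $\cM$, and show that the induced map from $F$ of its last term is a right $F\cM$-approximation via fullness, faithfulness, and the defining Hom-exactness of the $d$-kernel. The only cosmetic difference is that you re-run the construction from the proof of Lemma \ref{lemma:3} to exhibit $G$ as $\Ker(F(f^2))$, whereas the paper invokes Lemma \ref{lemma:3} to get the exact sequence $0\to G\to F(X^d)\to F(X^{d+1})$ directly.
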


\begin{proof}
Let $G\in \md \cP$ be arbitrary. By Lemma \ref{lemma:3} there exist objects $X^d,X^{d+1}\in \cM$ and an exact sequence
\[
0\to G\xrightarrow{i} F(X^d)\xrightarrow{\phi}F(X^{d+1})
\]
where $\phi=F(f^d)$ since $F$ is full. Let
\[
(f^0,\cdots f^{d-1}): X^0\xrightarrow{f^0} X^1\xrightarrow{f^1}\cdots  \xrightarrow{f^{d-2}}X^{d-1} \xrightarrow{f^{d-1}}X^{d}
\]
be a $d$-kernel of $f^d$. Since $F(f^d)\circ F(f^{d-1})=0$, we get an induced morphism $F(X^{d-1})\xrightarrow{p} G$. We claim that $p$ is a right $F\cM$-approximation of $G$. Let $X\in \cM$ and let $F(X)\xrightarrow{\psi} G$ be an arbitrary morphism in $\md \cP$. Since $F$ is full, the composition $F(X)\xrightarrow{\psi} G\xrightarrow{i} F(X^{d})$ is of the form $F(f)$ for some morphism $f\colon X\to X^d$. Since $f^d\circ f$=0 and 
\[
\cM(X,X^{d-1})\xrightarrow{f^{d-1}\circ -}\cM(X,X^d)\xrightarrow{f^d\circ -}\cM(X,X^{d+1})
\]
is exact, it follows that $f=f^{d-1}\circ g$ for some morphism $g\colon X\to X^{d-1}$. Applying $F$ gives 
\[
i\circ p\circ F(g)= F(f^{d-1})\circ F(g) = F(f) = i\circ \psi
\]
and since $i$ is a monomorphism, we get that $p\circ F(g)= \psi$. This shows that $p$ is a right $F\cM$-approximation, and since $G$ was arbitrary it follows that $F\cM$ is contravariantly finite.  
\end{proof}

\begin{Rem}\label{remark:1}
Let $\cM$ be an injectively cogenerated $d$-abelian category, and let $\cI$ be the category of injective objects in $\cM$. Furthermore, let $G\colon \cM\to (\cI \md)\op$ be the functor given by $G(X):=\cM(X,-)|_{\cI}$. Here $\cI \md$ denotes the category of finitely presented covariant functors from $\cI$ to $\md \Z$. The dual of Theorem \ref{thm:1} tells us that $G$ is a fully faithful functor, $(\cI \md)\op$ is an abelian category, and the essential image 
\[
G\cM:= \{M\in (\cI\md)\op \text{ }|\text{ }\exists X\in \cM \text{ such that } G(X)\cong M\}
\]
is $d$-cluster tilting in $(\cI \md)\op$.
\end{Rem}

\end{document}